\numberwithin{equation}{section}
\newtheorem{theorem}{Theorem}[section]
\newtheorem{lemma}[theorem]{Lemma}
\newtheorem{proposition}[theorem]{Proposition}
\theoremstyle{definition}
\newtheorem{definition}[theorem]{Definition}
\newtheorem{def-prop}[theorem]{Definition-Proposition}
\newtheorem{remark}[theorem]{Remark}
\newtheorem{example}[theorem]{Example}
\newtheorem{notation}[theorem]{Notation}
\DeclareMathOperator{\vol}{vol}
\DeclareMathOperator{\ini}{in}
\def\NZQ{\mathbb}               
\def\NN{{\NZQ N}}
\def\aa{\mathbf{a}}
\def\mm{{\mathfrak m}}
\def\qq{{\frak q}}
\def\gr{\operatorname{gr}}
\def\x{\mathbf{X}}
\def\1{{\bf 1}}
\def\0{{\bf 0}}
\def\b{\mathbf{b}}
\begin{document}


\title{Growth of multiplicities of graded families of ideals}

\author{Huy T\`ai H\`a}
\address{Tulane University \\ Department of Mathematics \\
6823 St. Charles Ave. \\ New Orleans, LA 70118, USA}
\email{tha@tulane.edu}
\urladdr{http://www.math.tulane.edu/$\sim$tai/}

\author{Pham An Vinh}
\address{Department of Mathematics \\ University of Missouri \\ Columbia, MO 65211, USA}
\email{vapnnc@mizzou.edu}

\keywords{graded family, volume, multiplicity, asymptotic polynomial}
\subjclass[2000]{13H15, 13H05, 14B05, 14C20}
\thanks{H\`a is partially supported by the Simons Foundation (grant \#279786).}

\begin{abstract}
Let $(R,\mm)$ be a Noetherian local ring of dimension $d > 0$. Let $I_\bullet = \{I_n\}_{n \in \NN}$ be a graded family of $\mm$-primary ideals in $R$. We examine how far off from a polynomial can the length function $\ell_R(R/I_n)$ be asymptotically. More specifically, we show that there exists a constant $\gamma > 0$ such that for all $n \ge 0$,
$$\ell_R(R/I_{n+1}) - \ell_R(R/I_n) < \gamma n^{d-1}.$$
\end{abstract}

\maketitle


\section{Introduction} \label{sec.intro}

Let $(R,\mm)$ be a Noetherian local ring of dimension $d > 0$. Let $I_\bullet = \{I_n\}_{n \in \NN}$ be a graded family of $\mm$-primary ideals in $R$ (that is, $I_0 = R$ and $I_mI_n \subseteq I_{m+n}$ for all $m,n \in \NN$). The \emph{volume} of $I_\bullet$ is defined to be
$$\vol(I_\bullet) := \limsup_{n \rightarrow \infty} \frac{\ell_R(R/I_n)}{n^d/d\!},$$
where $\ell_R(-)$ denotes the length function. Classically, if $I_n = I^n$, for all $n$, are powers of a fixed $\mm$-primary ideal $I$ then $\vol(I_\bullet) = e(I)$ is the well known Hilbert-Samuel \emph{multiplicity} of $I$.
In recent years, there has been a surge of interest in studying the volume $\vol(I_\bullet)$, and particularly the asymptotic behavior of $\ell_R(R/I_n)$, when $I_\bullet$ is an arbitrary graded family (cf. \cite{C3, C2, C1, C0, F, HPV, JM, KK, LM, M, NU, UV}). Under mild assumptions on the ring $R$, the following statements have been established:
\begin{enumerate}
\item $\vol(I_\bullet)$ is an actual limit, i.e., the limit $\lim_{n \rightarrow \infty} \dfrac{\ell_R(R/I_n)}{n^d/d!}$ exists; and
\item $\vol(I_\bullet)$ is the same as the \emph{asymptotic multiplicity} of $I_\bullet$, i.e.,
$$\vol(I_\bullet) = \lim_{s \rightarrow \infty} \dfrac{e(I_s)}{s^d}.$$
\end{enumerate}

This program of research originated from Okounkov's work \cite{O1,O2}, in which the asymptotic multiplicity of graded families of algebraic objects was interpreted in terms of the volume of certain cones (the \emph{Okounkov body}). This method was later developed more systematically by Lazarsfeld and Musta\c{t}\v{a} \cite{LM} and by Kaveh and Khovanskii \cite{KK} for graded linear series on projective schemes. In particular, statements (1) and (2) were proved in \cite{LM} when $R$ is essentially of finite type over an algebraically closed field $k$ with $R/\mm = k$. In a series of papers \cite{C3, C2, C1, C0}, Cutkosky used a different approach to extend this result to hold for an arbitrary field $k$. Specifically, he showed that statement (1) holds for all graded families $I_\bullet$ of $\mm$-primary ideals in $R$ if and only if the nilradical of the $\mm$-adic completion of $R$ has dimension strictly less than $d$, and statement (2) holds when $R$ is analytically unramifield. As a consequence, it was also deduced (see \cite[Corollary 6.3]{C1}) that the \emph{epsilon multiplicity} of an ideal $I$, $\epsilon(I) := \limsup_{n \rightarrow \infty} \frac{\ell_R(H^0_{\mm}(R/I^n))}{n^d/d!}$, defined by Ulrich and Validashti \cite{UV}, existed as an actual limit.

It is known (cf. \cite{C1, CHST}) that the volume $\vol(I_\bullet)$ in general can be an irrational number. Thus, $\ell_R(R/I_n)$ asymptotically does not behave like a polynomial. Our motivation in this paper is the question of how far off from a polynomial can the function $\ell_R(R/I_n)$ be asymptotically. More precisely, we investigate the growth of the difference function $\ell_R(R/I_{n+1}) - \ell_R(R/I_n)$.

It was shown in \cite[Theorem 4.5]{C3} that when $R$ is a \emph{regular} local ring of dimension $d > 0$ and $I_\bullet = \{I_n\}_{n \in \NN}$ is a graded \emph{filtration} (i.e., $I_{n+1} \subseteq I_n$ for all $n \in \NN$) of $\mm$-primary ideals, there exists a constant $\gamma > 0$ such that $0 \le \ell_R(R/I_{n+1}) - \ell_R(R/I_n) < \gamma n^{d-1}$ for all $n \ge 0$. Our main result extends this to an arbitrary Noetherian local ring.

\begin{theorem}[Theorem \ref{T1}] \label{thm.intro}
Let $(R, \mm)$ be a Noetherian local ring of dimension $d > 0$. Let $I_\bullet = \{I_n\}_{n \in \NN}$ be an arbitrary graded family of $\mm$-primary ideals in $R$. Then there exists a constant $\gamma > 0$ such that for all $n \ge 0$, we have
$$\ell_R(R/I_{n+1}) - \ell_R(R/I_n) < \gamma n^{d-1}.$$
\end{theorem}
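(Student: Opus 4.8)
The plan is to reduce to a situation where we can compare $\ell_R(R/I_{n+1})$ and $\ell_R(R/I_n)$ via a surjection whose kernel is controlled. The key observation is that $I_n \supseteq I_1 I_{n} $ is the wrong direction; instead one should use $I_1^n \subseteq I_n$ together with the inclusions coming from the graded structure. Concretely, multiplication by a generic element (or the whole ideal $I_1$) gives a surjection $I_n / I_1 I_n \twoheadrightarrow (I_1 I_n + I_{n+1})/I_{n+1}$, and more usefully one studies the map $R/I_{n+1} \to R/I_n$ — but since there is no containment between $I_n$ and $I_{n+1}$ for an arbitrary graded family, the difference $\ell_R(R/I_{n+1}) - \ell_R(R/I_n)$ must be estimated through a common refinement such as $I_n \cap I_{n+1}$ or, better, through the product ideal $I_1 I_n \subseteq I_{n+1}$, giving
\[
\ell_R(R/I_{n+1}) - \ell_R(R/I_n) \le \ell_R(I_n/I_1 I_n).
\]
So the crux is to bound $\ell_R(I_n / I_1 I_n)$ by $\gamma n^{d-1}$.

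The second step is to bound $\ell_R(I_n/I_1 I_n)$. Here $I_n/I_1 I_n$ is a module over $R/I_1$, which is an Artinian ring, generated by the minimal generators of $I_n$; its length is thus at most $\mu(I_n) \cdot \ell_R(R/I_1)$, where $\mu(-)$ denotes minimal number of generators. So it suffices to show $\mu(I_n) \le \gamma' n^{d-1}$ for a constant $\gamma'$ independent of $n$. This is the heart of the matter. One approach: pass to the associated graded / Rees-type algebra $\bigoplus_n I_n t^n$, which is a (not necessarily Noetherian) graded $R$-algebra, and relate $\mu(I_n)$ to the values of a Hilbert-type function. Alternatively, and more robustly, use that $\mu(I_n) = \ell_R(I_n/\mm I_n) \le \ell_R(R/\mm I_n) - \ell_R(R/I_n)$, and $\mm I_n \supseteq \mm I_1 I_{n-1} \supseteq \cdots$; since $\mm^{k}\subseteq I_1$ for some fixed $k$ (as $I_1$ is $\mm$-primary), we get $I_1 I_{n-1} \subseteq I_n$ and hence $\mm I_n \supseteq \mm^{k+1} I_{n-1}\cdots$, and one extracts a bound $\ell_R(R/\mm I_n) - \ell_R(R/I_n) = \ell_R(I_n/\mm I_n)$ comparable to a difference of Hilbert–Samuel-type functions at consecutive degrees, which grows like $n^{d-1}$.

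The third step is to make the $n^{d-1}$ bound uniform. The cleanest route is to invoke the known growth estimate for $\ell_R(R/I_n)$ itself: under no hypotheses on $R$, one still has $\ell_R(R/I_n) \le C n^{d}$ for all $n$ (since $\mm^{kn}\subseteq I_1^n \subseteq I_n$ for fixed $k$, so $\ell_R(R/I_n)\le \ell_R(R/\mm^{kn})$, which is eventually a polynomial of degree $d$ in $n$). Then the desired statement is essentially a ``discrete derivative'' bound: one shows that a nonnegative-ish sequence with $O(n^d)$ growth that arises from a graded family cannot have jumps larger than $O(n^{d-1})$, by comparing $I_{n+1}$ with $I_1 I_n$ as above and telescoping. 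I would first prove the case where $R$ is Cohen–Macaulay or even regular — where one can choose a system of parameters inside $I_1$ and argue with a regular sequence — and then reduce the general case to this by passing to $R/\mm$-vector space dimensions after modding out a maximal regular sequence, or by the standard trick of going modulo a minimal prime of maximal dimension and using that lengths add up along a filtration with prime quotients.

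The main obstacle I anticipate is the lack of any containment between $I_n$ and $I_{n+1}$ in an arbitrary graded family: unlike the filtration case handled in \cite[Theorem 4.5]{C3}, the difference $\ell_R(R/I_{n+1}) - \ell_R(R/I_n)$ is not obviously controlled by a single module. Routing everything through the product $I_1 I_n \subseteq I_{n+1}$ fixes the sign and gives the upper bound we want, but it means the lower bound $0 \le \ell_R(R/I_{n+1}) - \ell_R(R/I_n)$ from the filtration case genuinely fails here — consistent with the theorem statement, which only asserts the upper bound. The remaining technical work is then to control $\mu(I_n)$, and the subtlety there is that $\bigoplus_n I_n$ need not be Noetherian, so one cannot directly cite Hilbert-function polynomiality; instead one must compare against the genuinely Noetherian algebra $\bigoplus_n \mm^{kn}$ sandwiching it, and track how the minimal generators behave under this sandwich.
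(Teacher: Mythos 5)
Your steps (1)--(2) are correct and give a clean reduction: since $I_1I_n\subseteq I_{n+1}$ one has $\ell_R(R/I_{n+1})-\ell_R(R/I_n)\le \ell_R(I_n/I_1I_n)\le \mu(I_n)\cdot\ell_R(R/I_1)$, where $\mu$ denotes the minimal number of generators; this parallels the paper's comparison of $A/\mm_A^rJ_n$ with $A/J_n$. The genuine gap is step (3): the bound $\mu(I_n)\le \gamma' n^{d-1}$ is the entire content of the theorem, and none of your proposed justifications establishes it. The identity $\mu(I_n)=\ell_R(I_n/\mm I_n)=\ell_R(R/\mm I_n)-\ell_R(R/I_n)$ merely rewrites one length difference of the kind you are trying to control as another; there is no ``Hilbert--Samuel polynomial at consecutive degrees'' to invoke, because for an arbitrary graded family $\ell_R(R/I_n)$ is provably not asymptotically polynomial (its normalized limit can be irrational). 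Sandwiching by the Noetherian algebra $\bigoplus_n\mm^{kn}$ only yields the trivial estimate: $\mm^{kn+1}\subseteq\mm I_n$ gives $\ell_R(I_n/\mm I_n)\le\ell_R(R/\mm^{kn+1})=O(n^d)$, one degree too large. Telescoping an $O(n^d)$ sequence controls averages of the jumps, never individual jumps. And the suggested reductions fail: modding out a maximal regular sequence (a full system of parameters) lands you in an Artinian ring, where the statement is false for $n\gg 0$ (Example \ref{example}); passing to $R$ modulo a minimal prime and using additivity of length along a prime filtration does not track the ideals $I_n$ or their generators. Even writing $R$ as a quotient of a regular local ring $S$ and using the antichain bound for initial ideals in $S$ gives $O(n^{\dim S-1})$ with $\dim S$ possibly much larger than $d$.

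What is actually needed, and what the paper supplies, is a uniform bound of the form $\ell_R(I/\qq^r I)=O(N^{d-1})$ for single ideals $I\supseteq\mm^N$ in an arbitrary $d$-dimensional Noetherian local ring. The paper gets the correct exponent $d-1$ by choosing a parameter ideal $\qq=(x_1,\dots,x_d)$, passing to $\gr_\qq(R)\cong A/\mathfrak{a}$ with $A=(R/\qq)[X_1,\dots,X_d]$ a polynomial ring in exactly $d$ variables over the Artinian ring $R/\qq$ (this is the dimension-reduction device your sketch lacks), replacing each ideal by its initial ideal with respect to the degree lexicographic order to obtain monomial ideals with coefficients in $R/\qq$, and then proving Proposition \ref{L2}: a weighted antichain count showing that each fiber of the projection $\NN^d\to\NN^{d-1}$ meets the locus where $\ell_R(R_{\aa}(J))$ drops in at most $\ell\cdot r$ points, so the total defect is at most $\ell^2 r\binom{(d-1)+(N+r-1)}{N+r-1}=O(r(N+r)^{d-1})$. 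With $r=1$ this is exactly the generator bound your argument presupposes. So your outline identifies the right pivot quantity but leaves its estimation --- the only hard part --- unproved.
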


Without the assumption that $I_\bullet$ is a filtration, it is no longer true that $0 \le \ell_R(R/I_{n+1}) - \ell_R(R/I_n)$. In fact, \cite[Theorem 4.3]{C3} gave an example where
$$\liminf_{n \rightarrow \infty} \ell_R(R/I_{n+1}) - \ell_R(R/I_n) = -\infty.$$
It was also pointed out in \cite[Theorem 4.6]{C3} that there existed a graded filtration $I_\bullet = \{I_n\}_{n \in \NN}$ for which
$$\dfrac{\ell_R(R/I_{n+1}) - \ell_R(R/I_n)}{n^{d-1}}$$
is bounded but does not have a limit when $n \rightarrow \infty$. We exhibit in Example \ref{example1} that the constant $\gamma$ in Theorem \ref{thm.intro} may depend on the graded family $I_\bullet$. In Example \ref{example} we recall a graded family of ideals given by Cutkosky \cite{C2} to show that when $\dim R = 0$ the statement of Theorem \ref{thm.intro} may fail for $n \gg 0$. Hence, the conclusion of Theorem \ref{thm.intro} is the best of what we can hope for.

To prove Theorem \ref{thm.intro}, we first let $\qq$ be an $\mm$-primary ideal of $R$ generated by a system of parameters $(x_1, \dots, x_d)$, and pass to the associated graded ring $\gr_\qq(R)$. This ring is a graded $R/\qq$-algebra generated by the residues of $x_1, \dots, x_d$ in $\qq/\qq^2$, and thus can be realized as a quotient ring of the polynomial ring $A = R/\qq[X_1, \dots, X_d]$. By using initial ideal theory, we can reduce our problem to the situation when we have a graded family of monomial ideals in $A$. The statement of Theorem \ref{thm.intro} is eventually obtained by approximating this graded family of monomial ideals in $A$ with a family of powers of the irrelevant ideal $\mm_A$ of $A$.

Our paper is outlined as follows. In the next section, we collect notations and terminology used in the paper. In Section \ref{sec.gr}, we discuss how to pass the problem to the associated graded ring of an $\mm$-primary ideal $\qq$. Section \ref{sec.poly} is devoted to graded families of ideals in a polynomial ring. In this section, we shall obtain a bound for the growth of the length function when the graded family consists of powers of the irrelevant ideal. Our main result, Theorem \ref{thm.intro} is proved in Section \ref{sec.main}.

\vspace{0.5em}
\noindent{\bf Acknowledgement.} We would like to thank Dale Cutkosky for suggesting the problem to us, and for many helpful discussions and invaluable comments.


\section{Notations and Terminology} \label{sec.notation}

We follow standard notations and terminology of \cite{E, H, SZ}. Throughout the paper, $\NN$ will be the set of non-negative integers, $(R,\mm)$ will denote a Noetherian local ring of dimension $d > 0$, and $\ell_R(-)$ will denote the length function. We shall now recall the notion of graded families of ideals, which is the main object of our study.

\begin{definition} A family $I_\bullet = \{I_n\}_{n \in \NN}$ of ideals in $R$ is called a \emph{graded} family if $I_0 = R$ and for all $m,n \in \NN$, we have $I_m I_n \subseteq I_{m+n}$. A graded family of ideals $\{I_n\}_{n \in \NN}$ is called a \emph{filtration} if $I_{n+1} \subseteq I_n$ for all $n \in \NN$.
\end{definition}

Our method in proving the main result is to pass to the associated graded ring, so we shall recall this notion and the initial ideal theory for associated graded rings.

\begin{definition} Let $I_\bullet = \{I_n\}_{n \in \NN}$ be a graded filtration of ideals in $R$.
\begin{enumerate}
\item Let $M$ be an $R$-module. The \emph{associated graded ring} of $M$ with respect to $I_\bullet$ is defined to be
$$\gr_{I_\bullet}(M) := \bigoplus_{n \in \NN} I_nM/I_{n+1}M.$$

\item Let $f \not= 0$ be an element in $R$, and assume that $\bigcap_{n \in \NN} I_n = 0$. The \emph{initial form} of $f$ with respect to $I_\bullet$ is defined to be
    $$\ini_{I_\bullet}(f) := f+ I_{s+1} \in \gr_{I_\bullet}(R),$$
    where $s \ge 0$ is the integer such that $f \in I_s \setminus I_{s+1}$.
\item Let $J \subseteq R$ be an ideal in $R$, and assume that $\bigcap_{n \in \NN} I_n = 0$. The \emph{initial ideal} of $J$ with respect to $I_\bullet$ is defined to be
$$\ini_{I_\bullet}(J) := \big(\ini_{I_\bullet}(f) ~|~ f \in J\big).$$
\end{enumerate}
\end{definition}

\begin{remark} When $I_\bullet = \{I^n\}_{n \in \NN}$ is given by powers of a fixed ideal $I$ in $R$, we shall use $\gr_I(M)$, $\ini_I(f)$ and $\ini_I(J)$ for $\gr_{I_\bullet}(M)$, $\ini_{I_\bullet}(f)$ and $\ini_{I_\bullet}(J)$, respectively.
\end{remark}

Suppose now that $A = R[X_1, \dots, X_d]$ is a polynomial ring over $R$. Let $\mm_A = (X_1, \dots, X_d)$ be the \emph{irrelevant ideal} of $A$. Let $\prec$ be the degree lexicographic monomial ordering on $A$. For $\aa = (a_1, \dots, a_d) \in \mathbb{N}^d$, let $|\aa| = \sum_{i=1}^d a_i$, and we shall write $\x^\aa$ for the monomial $X_1^{a_1} \dots X_d^{a_d}$ in $A$. We shall also denote by $\aa+1$ the smallest element $\aa' \in \NN^d$ such that $\aa'$ is larger than $\aa$ with respect to $\prec$ (i.e., $\x^{\aa'}$ is the immediate succeeding monomial after $\x^\aa$ in the total ordering of the monomials of $A$).

For $\aa \in \NN^d$, let $A_{\succeq \aa}$ be the ideal of $A$ generated by monomials $\{\x^\b ~|~ \aa \preceq \b\}$. It is easy to see that the degree lexicographic monomial ordering $\prec$ on $A$ gives a graded filtration $A_\bullet = \{A_{\succeq \aa}\}_{\aa \in \NN^d}$ of ideals in $A$.

\begin{notation} We shall denote by $\gr_\prec(A)$ the associated graded ring of $A$ with respect to the filtration $A_\bullet = \{A_{\succeq \aa}\}_{\aa \in \NN^d}$, i.e.,
$$\gr_{\prec}(A) = \bigoplus_{\aa \in \mathbb{N}^d} A_{\succeq \aa}/A_{\succeq \aa +1}.$$
For an $A$-module $M$, we shall also define $\gr_{\prec}(M)$ to be
$$\gr_{\prec}(M) := \bigoplus_{\aa \in \NN^d} A_{\succeq \aa} M/A_{\succeq \aa +1}M.$$
\end{notation}

\begin{remark}
Let $\0$ be the zero vector in $\NN^d$. We can identify $A_{\succeq \0}/A_{\succeq \0 + 1}$ with $R$. Note that each graded component $A_{\succeq \aa}/A_{\succeq \aa +1}$ of $\gr_{\prec}(A)$ is a free $R$-module generated by the residue of $\x^\aa$. Thus, there is a canonical map
$$
\phi: \gr_{\prec}(A) = \bigoplus_{\aa \in \mathbb{N}^d} R(\x^\aa + A_{\succeq \aa +1}) \longrightarrow \bigoplus_{\aa \in \mathbb{N}^d} R \x^\aa = A.
$$
It can be seen that $\phi$ is both a ring isomorphism and an $R$-module isomorphism.
\end{remark}

\begin{definition} Let $J \subseteq A$ be an ideal. We define the \emph{leading ideal} of $J$ in $\gr_\prec(A)$ to be
$$
\mathrm{ld}(J) = \bigoplus_{\aa \in \mathbb{N}^d} \dfrac{A_{\succeq \aa} \cap (A_{\succeq \aa +1} + J)}{A_{\succeq \aa +1}},
$$
and the \emph{initial ideal} of $J$ in $A$ with respect to $\prec$ to be
$$
\ini_{\prec}(J) = \phi(\mathrm{ld}(J)).
$$
The ideal $J \subseteq A$ is called a \emph{monomial ideal} if $J = \bigoplus_{\aa \in \mathbb{N}^d} (J \cap R\x^{\aa})$.
\end{definition}

\begin{remark} Note that in general, the initial ideal $\ini_{\prec}(J)$, as we have defined, is not the same as the ideal generated by \emph{leading monomials} of elements in $A$ (with respect to $\prec$). This is because the ring $R = A_\0$ is not necessarily a field.
\end{remark}


\section{Lengths in Associated Graded Rings} \label{sec.gr}

In this section, we shall see how to pass the length function from $R$-modules to that over the associated graded ring of $R$ with respect to an ideal. Recall that if $I \subseteq R$ is an ideal such that $\bigcap_{n \in \NN} I^n = 0$, then for any ideal $J \subseteq R$, the initial ideal of $J$ with respect to $I$ is defined to be
$$\ini_I(J) = (\ini_I(f) ~|~ f \in J),$$
where $\ini_I(f) = f + I^{s+1} \in \gr_I(R)$ with $s \ge 0$ being the integer such that $f \in I^s \setminus I^{s+1}$. From the definition, it can be seen that
$$\ini_I(J) = \bigoplus_{n \in \NN} \frac{I^n \cap (I^{n+1} + J)}{I^{n+1}}.$$

\begin{lemma} \label{L1}
Let $I$ and $J$ be ideals in $R$. Assume that $\bigcap_{n \in \NN} I^n = 0$. Then
$$
\gr_I(R/J) \cong \gr_I(R)/\ini_I(J).
$$
\end{lemma}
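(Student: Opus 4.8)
The plan is to construct the isomorphism degree by degree, comparing the $n$-th graded pieces of $\gr_I(R/J)$ and $\gr_I(R)/\ini_I(J)$ directly as $R/I$-modules, and then check that the resulting map respects multiplication. First I would write out the $n$-th graded component of the left-hand side. Since the $I$-adic filtration on $R/J$ is $\{(I^nR + J)/J\}_{n \ge 0}$, we have
$$
\big(\gr_I(R/J)\big)_n = \frac{(I^n+J)/J}{(I^{n+1}+J)/J} \cong \frac{I^n+J}{I^{n+1}+J}.
$$
On the other side, using the description $\ini_I(J) = \bigoplus_{n} \big(I^n \cap (I^{n+1}+J)\big)/I^{n+1}$ recorded just before the lemma, the $n$-th graded piece of $\gr_I(R)/\ini_I(J)$ is
$$
\frac{I^n/I^{n+1}}{\big(I^n \cap (I^{n+1}+J)\big)/I^{n+1}} \cong \frac{I^n}{I^n \cap (I^{n+1}+J)}.
$$

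The key step is then a routine isomorphism theorem computation: the inclusion $I^n \hookrightarrow I^n + J$ induces a surjection $I^n \twoheadrightarrow (I^n+J)/(I^{n+1}+J)$ whose kernel is exactly $I^n \cap (I^{n+1}+J)$, giving
$$
\frac{I^n}{I^n \cap (I^{n+1}+J)} \cong \frac{I^n+J}{I^{n+1}+J}.
$$
So the two graded pieces agree for every $n$. I would next verify that assembling these isomorphisms gives a map of graded rings — concretely, that the natural surjection $\gr_I(R) \to \gr_I(R/J)$ (induced by $R \to R/J$, which is compatible with the filtrations since $I^n$ maps into $(I^n+J)/J$) has kernel equal to $\ini_I(J)$; the graded-piece computation above is precisely the statement that this kernel is $\bigoplus_n \big(I^n \cap (I^{n+1}+J)\big)/I^{n+1} = \ini_I(J)$. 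Since $\gr_I(R) \to \gr_I(R/J)$ is a ring homomorphism by construction, the induced map on the quotient is a ring isomorphism, and it is $R/I$-linear, hence the claimed isomorphism.

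I do not expect a genuine obstacle here; the only point requiring mild care is bookkeeping with the filtration on $R/J$ and the use of the hypothesis $\bigcap_n I^n = 0$, which is what makes $\ini_I(-)$ and $\gr_I(-)$ well-defined in the sense of the preceding definitions (it guarantees every nonzero element of $R$, and of $R/J$, has a well-defined initial form). The surjectivity of $\gr_I(R) \to \gr_I(R/J)$ is immediate since $R \to R/J$ is surjective and carries $I^n$ onto the $n$-th filtration piece of $R/J$, so the whole argument reduces to the identification of the kernel, which is the graded computation above.
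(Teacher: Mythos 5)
Your proposal is correct and follows essentially the same route as the paper: both compute the $n$-th graded piece of $\gr_I(R/J)$ as $(I^n+J)/(I^{n+1}+J)$, identify it via the isomorphism theorem with $(I^n/I^{n+1})\big/\big(I^n\cap(I^{n+1}+J)/I^{n+1}\big)$, and sum over $n$. Your additional remark that these degreewise isomorphisms come from the natural surjection $\gr_I(R)\to\gr_I(R/J)$ with kernel $\ini_I(J)$ only makes explicit the ring-map compatibility that the paper leaves implicit.
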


\begin{proof}
We shall first compute the $n$th graded component of $\gr_I(R/J)$. We have
\begin{align*}
\frac{I^n(R/J)}{I^{n+1}(R/J)} & = \frac{I^n + J}{J}\Big/\frac{I^{n+1} + J}{J} \cong \frac{I^n + J}{I^{n+1} + J} \\
& \cong \frac{I^n}{I^n \cap (I^{n+1} + J)} \cong \frac{I^n/I^{n+1}}{I^n \cap (I^{n+1} + J)/I^{n+1}}.
\end{align*}
This implies that
$$
\gr_I(R/J) = \bigoplus_{n \in \NN} \frac{I^n(R/J)}{I^{n+1}(R/J)} \cong \frac{\bigoplus_{n \in \NN} I^n/I^{n+1}}{\bigoplus_{n \in \NN} I^n \cap (I^{n+1} + J)/I^{n+1}} = \frac{\gr_I(R)}{\ini_I(J)}.
$$
\end{proof}

\begin{proposition} \label{C1}
Let $\qq$ and $I$ be $\mm$-primary ideals. Then
$$
\ell_R(R/I) = \ell_{R/\qq}(\gr_{\qq}(R)/\ini_{\qq}(I)).
$$
\end{proposition}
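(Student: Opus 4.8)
The plan is to reduce the computation of $\ell_R(R/I)$ to a computation in the associated graded ring $\gr_\qq(R)$ via Lemma~\ref{L1}, and then to observe that passing to the initial ideal does not change lengths because it only amounts to taking the associated graded module of a finite-length module with respect to the $\qq$-adic filtration. First I would note that since $\qq$ is $\mm$-primary, $\bigcap_{n \in \NN}\qq^n = 0$ by the Krull intersection theorem, so that $\ini_\qq(I)$ and all the initial-ideal constructions of Section~\ref{sec.gr} are defined, and Lemma~\ref{L1} applies to give $\gr_\qq(R)/\ini_\qq(I) \cong \gr_\qq(R/J)$ with $J = I$. Hence $\ell_{R/\qq}\big(\gr_\qq(R)/\ini_\qq(I)\big) = \ell_{R/\qq}\big(\gr_\qq(R/I)\big)$.

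Next I would compute $\ell_{R/\qq}\big(\gr_\qq(R/I)\big)$ directly from the definition of the associated graded ring. Write $M = R/I$. Since $I$ is $\mm$-primary, $M$ has finite length as an $R$-module, so the $\qq$-adic filtration $M \supseteq \qq M \supseteq \qq^2 M \supseteq \cdots$ is a descending chain of $R$-submodules that stabilizes at $0$ (again by Krull's intersection theorem, or simply because a strictly descending chain in a finite-length module must terminate). Therefore
$$
\gr_\qq(M) = \bigoplus_{n \in \NN} \qq^n M/\qq^{n+1}M
$$
is a finite direct sum of nonzero terms, each a finite-length module, and the length is additive over the filtration:
$$
\ell_{R/\qq}\big(\gr_\qq(M)\big) = \sum_{n \ge 0} \ell_{R/\qq}\big(\qq^n M/\qq^{n+1}M\big) = \sum_{n \ge 0} \ell_R\big(\qq^n M/\qq^{n+1}M\big) = \ell_R(M).
$$
Here the middle equality uses that each $\qq^n M/\qq^{n+1}M$ is annihilated by $\qq$, so its length over $R$ and over $R/\qq$ coincide, and the last equality is the additivity of length on the finite filtration $M = \qq^0 M \supseteq \qq^1 M \supseteq \cdots \supseteq \qq^N M = 0$. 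Combining this with the previous paragraph yields $\ell_{R/\qq}\big(\gr_\qq(R)/\ini_\qq(I)\big) = \ell_R(R/I)$, as claimed.

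I do not anticipate a serious obstacle here: the argument is essentially a bookkeeping of lengths along a filtration, with Lemma~\ref{L1} supplying the identification of the quotient $\gr_\qq(R)/\ini_\qq(I)$ with $\gr_\qq(R/I)$. The only point that requires a moment of care is making sure the $\qq$-adic filtration of $R/I$ terminates so that the (a priori infinite) direct sum defining $\gr_\qq(R/I)$ is actually a finite sum of finite-length pieces; this follows immediately from $\ell_R(R/I) < \infty$, which holds precisely because $I$ is $\mm$-primary. The hypothesis that $\qq$ is $\mm$-primary is used twice: to guarantee $\bigcap_n \qq^n = 0$ (so the initial-ideal machinery is available) and to ensure $R/\qq$ is Artinian so that the graded components are $R/\qq$-modules of finite length.
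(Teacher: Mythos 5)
Your proof is correct and follows essentially the same route as the paper: identify $\gr_\qq(R)/\ini_\qq(I)$ with $\gr_\qq(R/I)$ via Lemma~\ref{L1}, then compute $\ell_R(R/I)$ by additivity of length along the (finite, since $\qq^N\subseteq I$ forces $\qq^N(R/I)=0$) $\qq$-adic filtration of $R/I$, noting each graded piece is killed by $\qq$ so its $R$- and $R/\qq$-lengths agree. No gaps.
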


\begin{proof}
Observe that since $\qq$ and $I$ are $\mm$-primary ideals, there exists an integer $N$ so that $\qq^N$ is contained in $I$. Thus, $\qq^N(R/I) = 0$. Therefore, we have
\begin{align*}
\ell_R(R/I) & = \sum_{n \in \NN} \ell_R\Big(\frac{\qq^n(R/I)}{\qq^{n+1}(R/I)}\Big) = \sum_{n \in \NN} \ell_{R/\qq}\Big(\frac{\qq^n(R/I)}{\qq^{n+1}(R/I)}\Big) = \ell_{R/\qq}(\gr_{\qq}(R/I)),
\end{align*}
where the sums are finite sums.
The statement now follows from Lemma \ref{L1}.
\end{proof}


\section{Lengths in Polynomial Rings} \label{sec.poly}

This section provides important tools to prove our main result, Theorem \ref{thm.intro}. We shall see how the length function of a graded family of ideals behaves when the family consists of monomial ideals in a polynomial ring. Throughout this section, we shall make an additional assumption on the local ring $(R,\mm)$ that $\dim R = 0$, i.e., $R$ is an Artinian ring. Define $\ell = \ell_R(R) < \infty$.

Let $A = R[X_1, ..., X_d]$ be a polynomial ring over $R$ and let $\mm_A = (X_1, ..., X_d)$ be its irrelevant ideal. Recall that $\prec$ denotes the degree lexicographic monomial ordering on $A$, and for $\aa = (a_1, \dots, a_d) \in \mathbb{N}^d$, we denote by $\aa + 1$ the smallest $\aa' \in \NN^d$ such that $\aa'$ is larger than $\aa$ with respect to $\prec$. Recall also that $A_{\succeq \aa}$ is the $A$-ideal generated by monomials $\{\x^\b ~|~ \aa \preceq \b\}$, $\gr_{\prec}(M)$ is the associated graded ring of an $A$-module $M$ with respect to the filtration $A_\bullet = \{A_{\succeq \aa}\}_{\aa \in \NN^d}$, and
$$
\phi: \gr_{\prec}(A) = \bigoplus_{\aa \in \mathbb{N}^d} R(\x^{\aa} + A_{\succeq \aa+1}) \longrightarrow \bigoplus_{\aa \in \mathbb{N}^d} R \x^{\aa} = A
$$
is the canonical map which is both a ring isomorphism and an $R$-module isomorphism.

We shall now collect some facts about initial ideals and associated graded rings associated to ideals in $A$.

\begin{lemma} \label{L3}
Let $J \subseteq A$ be an ideal such that $\mm_A \subseteq \sqrt{J}$.
\begin{enumerate}
\item[(a)] We have $\ini_{\prec}(J) = \bigoplus_{\aa \in \mathbb{N}^d} R_{\aa}(J)\x^{\aa}$ is a monomial ideal, where
    $$
    R_{\aa}(J) = \{c \in R \mid c \x^{\aa} \in \ini_{\prec}(J)\}.
    $$
    Moreover, if $J$ is a monomial ideal then $\ini_{\prec}(J) = J$.
\item[(b)] There are canonical ring isomorphisms as well as $R$-module isomorphisms
$$
\gr_{\prec}(A/J) \cong \gr_{\prec}(A)/\mathrm{ld}(J) \cong A/\ini_{\prec}(J).
$$
\item[(c)] We have
$$
\ell_R(A/J) = \ell_R(A/\ini_{\prec}(J)) = \sum_{\aa \in \mathbb{N}^d} \big[\ell - \ell_R(R_{\aa}(J)) \big] < \infty.
$$
\end{enumerate}
\end{lemma}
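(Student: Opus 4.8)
The plan is to build everything on top of the general machinery from Section \ref{sec.gr}, specialized to the polynomial ring $A$ with the filtration $A_\bullet = \{A_{\succeq \aa}\}_{\aa \in \NN^d}$ instead of the $I$-adic filtration. The key observation that makes this work is that $\bigcap_{\aa} A_{\succeq \aa} = 0$ (any nonzero element of $A$ has a well-defined leading term), so the initial-form and leading-ideal constructions are defined, and that each graded piece $A_{\succeq \aa}/A_{\succeq \aa+1}$ is the free rank-one $R$-module $R(\x^\aa + A_{\succeq \aa+1})$, identified with $R\x^\aa$ via $\phi$.

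For part (a), I would first unwind the definition $\mathrm{ld}(J) = \bigoplus_\aa \big(A_{\succeq \aa}\cap(A_{\succeq \aa+1}+J)\big)/A_{\succeq \aa+1}$. The submodule $\big(A_{\succeq \aa}\cap(A_{\succeq \aa+1}+J)\big)/A_{\succeq \aa+1}$ sits inside the free rank-one module $A_{\succeq \aa}/A_{\succeq \aa+1} \cong R\x^\aa$, so it has the form $R_\aa(J)\,\x^\aa$ for a well-defined ideal $R_\aa(J) \subseteq R$; applying $\phi$ gives $\ini_\prec(J) = \bigoplus_\aa R_\aa(J)\x^\aa$, which is visibly a monomial ideal in the sense of the definition in Section \ref{sec.notation} (its $\aa$-component is $R_\aa(J)\x^\aa \subseteq R\x^\aa$). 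The characterization $R_\aa(J) = \{c \in R \mid c\x^\aa \in \ini_\prec(J)\}$ is then immediate from the direct-sum decomposition. For the last assertion: if $J$ itself is a monomial ideal, write $J = \bigoplus_\aa (J\cap R\x^\aa) = \bigoplus_\aa J_\aa \x^\aa$; one checks directly that $A_{\succeq \aa}\cap(A_{\succeq \aa+1}+J) = A_{\succeq \aa+1} + J_\aa\x^\aa$, so $R_\aa(J) = J_\aa$ and $\ini_\prec(J) = J$.

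Part (b) is the analogue of Lemma \ref{L1} for the filtration $A_\bullet$. I would run the same chain of isomorphisms: for each $\aa$,
$$
\frac{A_{\succeq \aa}(A/J)}{A_{\succeq \aa+1}(A/J)} \cong \frac{A_{\succeq \aa}+J}{A_{\succeq \aa+1}+J} \cong \frac{A_{\succeq \aa}}{A_{\succeq \aa}\cap(A_{\succeq \aa+1}+J)} \cong \frac{A_{\succeq \aa}/A_{\succeq \aa+1}}{\big(A_{\succeq \aa}\cap(A_{\succeq \aa+1}+J)\big)/A_{\succeq \aa+1}},
$$
and summing over $\aa \in \NN^d$ gives $\gr_\prec(A/J) \cong \gr_\prec(A)/\mathrm{ld}(J)$; then apply the ring-and-$R$-module isomorphism $\phi$ to get $\gr_\prec(A)/\mathrm{ld}(J) \cong A/\ini_\prec(J)$. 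The hypothesis $\mm_A \subseteq \sqrt J$ is what guarantees finiteness later and also that $A_{\succeq \aa}(A/J) = 0$ for $\aa$ large, so that the direct sums are really the associated graded module.

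For part (c), the first equality $\ell_R(A/J) = \ell_R(A/\ini_\prec(J))$ follows by computing $\ell_R$ of both sides of (b): since $\ell_R$ is additive on the finite filtration by the $A_{\succeq \aa}$ (here is where $\mm_A \subseteq \sqrt J$, plus $\dim R = 0$ so that $R/\qq$-type arguments are unnecessary — $A/J$ already has finite length as one sees from $\mm_A^N \subseteq J$ for some $N$ together with $\ell = \ell_R(R) < \infty$), we get $\ell_R(A/J) = \ell_R(\gr_\prec(A/J)) = \ell_R(A/\ini_\prec(J))$. Then $\ell_R(A/\ini_\prec(J)) = \ell_R\big(\bigoplus_\aa R\x^\aa/R_\aa(J)\x^\aa\big) = \sum_\aa \ell_R(R/R_\aa(J)) = \sum_\aa [\ell - \ell_R(R_\aa(J))]$, using $\ell_R(R/R_\aa(J)) = \ell - \ell_R(R_\aa(J))$ and noting the sum is finite because $R_\aa(J) = R$ (equivalently $\x^\aa \in \ini_\prec(J)$) for all but finitely many $\aa$ — which again follows from $\mm_A \subseteq \sqrt J$, since $\x^\aa \in J \subseteq \ini_\prec(J)$ whenever $|\aa|$ is large.

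The main obstacle I anticipate is being careful about part (a): one must verify cleanly that $\big(A_{\succeq \aa}\cap(A_{\succeq \aa+1}+J)\big)/A_{\succeq \aa+1}$ really is a sub-$R$-module of the \emph{free} rank-one module $R\x^\aa$ (so that it equals $R_\aa(J)\x^\aa$ for a genuine ideal $R_\aa(J)$), and that $\phi$ respects this decomposition grade-by-grade; the subtlety flagged in the Remark after the definition of $\ini_\prec$ — that $R$ is not a field, so $\ini_\prec(J)$ is not simply generated by leading monomials — means one cannot shortcut this by the usual Gröbner-basis reasoning. Everything else is a routine transcription of Lemma \ref{L1} and Proposition \ref{C1} to the multigraded filtration $A_\bullet$.
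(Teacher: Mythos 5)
Your proposal is correct and follows essentially the same route as the paper's proof: decompose $\mathrm{ld}(J)$ grade by grade inside the free rank-one modules $A_{\succeq \aa}/A_{\succeq \aa+1}$, transport via $\phi$ to get the monomial decomposition in (a), rerun the Lemma \ref{L1} computation for (b), and use $\mm_A^N \subseteq J$ to make the sum in (c) finite with each summand bounded by $\ell$. One small imprecision: the containment $J \subseteq \ini_{\prec}(J)$ you invoke at the end is false for general ideals; what you actually need (and what is true) is only that the monomial $\x^{\aa}$ lies in $\ini_{\prec}(J)$ whenever $\x^{\aa} \in J$, since then $\x^{\aa} \in A_{\succeq \aa} \cap (A_{\succeq \aa+1}+J)$ and hence $R_{\aa}(J)=R$.
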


\begin{proof}
(a) By definition, $\text{ld}(J)$ is a homogeneous ideal in $\gr_{\prec}(A)$ and
$$
\text{ld}(J) = \bigoplus_{\aa \in \mathbb{N}^d} \text{ld}(J) \cap (A_{\aa}/A_{\aa+1}).
$$
Passing this equality to $A$ via $\phi$ we get
$$
\ini_{\prec}(J) = \bigoplus_{\aa \in \mathbb{N}^d} \ini_{\prec}(J) \cap R\x^{\aa}.
$$
Thus, $\ini_{\prec}(J)$ is a monomial ideal.

Suppose now that $J$ is a monomial ideal, i.e.,
$$
J = \bigoplus_{\aa \in \mathbb{N}^d} (J \cap R\x^{\aa}).
$$
For each $\aa \in \mathbb{N}^d$ and for every element $c \in R$, we have that
\begin{align*}
c \x^{\aa} \in J \cap R\x^{\aa} & \Leftrightarrow c \x^{\aa} + A_{\succeq \aa+1} \in \frac{A_{\succeq \aa} \cap (A_{\succeq \aa+1} + J)}{A_{\succeq \aa+1}} \\
& \Leftrightarrow c \x^{\aa} = \phi(c \x^{\aa} + A_{\succeq \aa+1}) \in \ini_{\prec}(J) \cap R\x^{\aa}.
\end{align*}
It follows that $J \cap R\x^{\aa} = \ini_{\prec}(J) \cap R\x^{\aa}$ for all $\aa \in \mathbb{N}^d$, and hence, $J = \ini_{\prec}(J)$.

(b) The first isomorphism can be proved by carrying out the same computation as in the proof of Lemma \ref{L1}, while the second isomorphism follows from definition.

(c) Since $\mm_A \subset \sqrt{J}$, there exists some $N \in \NN$ such that $\mm_A^N \subseteq J$. Thus, for $\aa \in \mathbb{N}^d$ such that $|\aa| \ge N$, we have $A_{\succeq \aa} \subseteq \mm_A^N \subseteq J$, and hence, $A_{\succeq \aa}(A/J) = 0$. This yields that
\begin{align*}
\ell_R(A/J) = \sum_{\aa \in \mathbb{N}^d} \ell_R\Big(\frac{A_{\succeq \aa}(A/J)}{A_{\succeq \aa+1}(A/J)}\Big) = \ell_R(\gr_{\prec}(A/J)) = \ell_R(A/\ini_{\prec}(J)),
\end{align*}
where the sum is a finite sum, and the last equality follows from part (b).

Observe that for $\aa \in \mathbb{N}^d$, we have a surjective $R$-module homomorphism
$$
\frac{A_{\succeq \aa}}{A_{\succeq \aa+1}} \twoheadrightarrow \frac{A_{\succeq \aa}+J}{A_{\succeq \aa+1}+J} \cong \frac{A_{\succeq \aa}(A/J)}{A_{\succeq \aa+1}(A/J)}, \text{ where } f + A_{\succeq \aa+1} \mapsto f + A_{\succeq \aa+1} + J.
$$
It follows that
$$
\ell_R\Big(\frac{A_{\succeq \aa}(A/J)}{A_{\succeq \aa+1}(A/J)}\Big) \le \ell_R\Big(\frac{A_{\succeq \aa}}{A_{\succeq \aa+1}}\Big) = \ell < \infty.
$$
Therefore, since the sum $\ell_R(A/J) = \sum_{\aa \in \mathbb{N}^d} \ell_R\big(\frac{A_{\succeq \aa}(A/J)}{A_{\succeq \aa+1}(A/J)}\big)$ is a finite sum, we conclude that $\ell_R(A/J) = \ell_R(A/\ini_{\prec}(J)) < \infty$.

Finally, for the remaining equality, it can be seen that
\begin{align*}
\ell_R(A/\ini_{\prec}(J)) = \ell_R\Big(\frac{\bigoplus_{\aa \in \mathbb{N}^d} R\x^{\aa}}{\bigoplus_{\aa \in \mathbb{N}^d} R_{\aa}(J)\x^{\aa}}\Big) = \ell_R\Big(\bigoplus_{\aa \in \mathbb{N}^d} \frac{R\x^{\aa}}{R_{\aa}(J)\x^{\aa}}\Big) = \sum_{\aa \in \mathbb{N}^d} \big[\ell - \ell_R(R_{\aa}(J))\big].
\end{align*}
The lemma is proved.
\end{proof}

The following is key for the proof of our main result, Theorem \ref{thm.intro}. It is also interesting on its own.

\begin{proposition} \label{L2}
Let $J$ be a monomial ideal of $A$ such that $\mm_A^N \subseteq J$ for some $N \in \NN$, and let $r \in \NN$ be a positive integer. Then there exists a constant $\gamma > 0$, which does not depend on $N$ and $r$, such that
$$
\ell_R(A/\mm_A^rJ) - \ell_R(A/J) \leq \ell^2 \cdot r \cdot{(d-1) + (N+r-1) \choose N+r-1} < \gamma \cdot r \cdot (N+r)^{d-1}.
$$
\end{proposition}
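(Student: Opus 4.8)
Since $\mm_A^rJ\subseteq J$, the left side equals $\ell_R(J/\mm_A^rJ)$, so the plan is to bound $\ell_R(J/\mm_A^rJ)$. As $J$ is a monomial ideal it is homogeneous for the standard $\NN$-grading of $A$ by total degree, and the same is true of $\mm_A^rJ$ (the ideal $\mm_A^r$ is generated by monomials of degree $r$); write $[M]_k$ for the degree-$k$ component of a graded $A$-module $M$, and recall $\ell_R([A]_k)=\ell\binom{k+d-1}{d-1}$. From $\mm_A^N\subseteq J$ we get $\mm_A^{N+r}\subseteq\mm_A^rJ\subseteq J$ (so all the modules below have finite length), and also $[J]_k=[A]_k$ for all $k\ge N$ and hence $[\mm_A^rJ]_k=[A]_k$ for all $k\ge N+r$; in particular $J/\mm_A^rJ$ is concentrated in degrees $<N+r$. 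Additivity of length along $\mm_A^{N+r}\subseteq\mm_A^rJ\subseteq J$ then gives
\[
\ell_R(J/\mm_A^rJ)=\ell_R(J/\mm_A^{N+r})-\ell_R(\mm_A^rJ/\mm_A^{N+r})=\sum_{k=0}^{N+r-1}\ell_R([J]_k)-\sum_{k=0}^{N+r-1}\ell_R([\mm_A^rJ]_k).
\]

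The key step is to bound the second sum \emph{from below}, one degree at a time. Multiplication by $X_1^r$ is an injective $A$-module homomorphism $J\to J$ — injective because $X_1$ is a nonzerodivisor on the polynomial ring $A$ — it raises degree by $r$, and its image $X_1^rJ$ lies in $\mm_A^rJ$; hence $[J]_{k-r}$ embeds in $[\mm_A^rJ]_k$, so $\ell_R([\mm_A^rJ]_k)\ge\ell_R([J]_{k-r})$ for every $k$ (with $[J]_j:=0$ for $j<0$). Substituting, the low-degree contributions cancel and
\[
\ell_R(J/\mm_A^rJ)\ \le\ \sum_{k=0}^{N+r-1}\ell_R([J]_k)-\sum_{k=r}^{N+r-1}\ell_R([J]_{k-r})\ =\ \sum_{k=N}^{N+r-1}\ell_R([J]_k)\ =\ \ell\sum_{k=N}^{N+r-1}\binom{k+d-1}{d-1},
\]
the last equality using $[J]_k=[A]_k$ for $k\ge N$. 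This sum has $r$ terms, each at most $\binom{(N+r-1)+d-1}{d-1}=\binom{(d-1)+(N+r-1)}{N+r-1}$, so $\ell_R(J/\mm_A^rJ)\le \ell\cdot r\cdot\binom{(d-1)+(N+r-1)}{N+r-1}\le \ell^2\cdot r\cdot\binom{(d-1)+(N+r-1)}{N+r-1}$, which is the asserted inequality (in fact with $\ell$ in place of $\ell^2$). The final strict inequality is routine: $\binom{(d-1)+(N+r-1)}{N+r-1}$ is a polynomial in $N+r$ of degree $d-1$ with leading coefficient $1/(d-1)!$, hence is bounded by $C_d\,(N+r)^{d-1}$ for a constant $C_d$ depending only on $d$ and all $N\ge0$, $r\ge1$; take $\gamma:=\ell^2C_d+1$.

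The only place the monomial (equivalently, homogeneity) hypothesis is used is in introducing the grading and in the identities $[J]_k=[A]_k$ ($k\ge N$) and $[J/\mm_A^rJ]_k=0$ ($k\ge N+r$); this last fact is precisely what legitimizes truncating the sum at degree $N+r-1$, and it is the subtle point. One should not hope to bound $\ell_R(J/\mm_A^rJ)$ degree-by-degree using the $X_1^r$-embedding alone: $\sum_k\bigl(\ell_R([J]_k)-\ell_R([J]_{k-r})\bigr)$ diverges, since in high degrees $[\mm_A^rJ]_k$ is strictly larger than the single shifted copy $X_1^r[J]_{k-r}$, so one genuinely has to first localize to the top $r$ degrees via the chain $\mm_A^{N+r}\subseteq\mm_A^rJ\subseteq J$. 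Apart from that observation I expect every verification (finiteness of the lengths, which follows from $\mm_A^{N+r}\subseteq\mm_A^rJ$, and that $X_1$ is a nonzerodivisor on $A$) to be entirely mechanical, so I do not anticipate a serious obstacle.
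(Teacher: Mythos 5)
Your proof is correct, and it takes a genuinely different route from the paper's. The paper works with the fine $\NN^d$-grading: via Lemma \ref{L3}.(c) it writes the difference as $\sum_{\aa}\big[\ell_R(R_\aa(J))-\ell_R(R_\aa(\mm_A^rJ))\big]$, and then bounds the number of multidegrees $\aa$ with a positive contribution by projecting to $\NN^{d-1}$ (the image lands in the simplex of size $\binom{(d-1)+(N+r-1)}{N+r-1}$) and showing each fiber meets the bad set in at most $\ell\cdot r$ points, via an iterated argument in which multiplication by $X_d^r$ forces the lengths $\ell_R(R_{(\b,a)}(J))$ to strictly increase along a chain, which can happen at most $\ell$ times. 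You instead use only the coarse total-degree grading: rewrite the difference as $\ell_R(J/\mm_A^rJ)$, observe it is concentrated in degrees $<N+r$ because $\mm_A^{N+r}\subseteq \mm_A^rJ\subseteq J$, and beat down $\ell_R([\mm_A^rJ]_k)$ from below by $\ell_R([J]_{k-r})$ using the injection given by multiplication by $X_1^r$ (a nonzerodivisor on $A$), so that the truncated sum telescopes to $\ell\sum_{k=N}^{N+r-1}\binom{k+d-1}{d-1}\le \ell\cdot r\cdot\binom{(d-1)+(N+r-1)}{N+r-1}$. Both proofs hinge on the same germ of an idea (a degree-raising multiplication by $X_i^r$ landing inside $\mm_A^rJ$), but your version applies it globally rather than fiber-by-fiber, is shorter and more elementary, needs only that $J$ is homogeneous for the total-degree grading (monomial in the paper's sense implies this, since the $\NN^d$-decomposition refines the $\NN$-grading), and yields the slightly sharper constant $\ell$ in place of $\ell^2$, which of course still implies the stated inequality since $\ell\ge 1$; your choice of $\gamma$ depending only on $\ell$ and $d$ matches what the proposition requires.
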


\begin{proof}
It follows from Lemma \ref{L3}.(c) that
\begin{align} \label{E4}
\ell_R(A/\mm_A^rJ) - \ell_R(A/J) = \sum_{\aa \in \mathbb{N}^d} \big[\ell_R(R_{\aa}(J)) - \ell_R(R_{\aa}(\mm_A^rJ))\big].
\end{align}

Let $\pi: \mathbb{N}^d \twoheadrightarrow \mathbb{N}^{d-1}$ be the projection on the first $d-1$ coordinates. Set
$$
\mathcal{P} = \{\aa \in \mathbb{N}^d \mid \ell_R(R_{\aa}(J)) - \ell_R(R_{\aa}(\mm_A^rJ)) > 0\}
$$
and
$$
\mathcal{Q} = \{\b \in \mathbb{N}^{d-1} ~\big|~ |b| < N + r\}.
$$
Since $\mm_A^N \subseteq J$, we have $\mm_A^{N+r} \subseteq \mm_A^rJ$. If $\aa \in \mathcal{P}$ then $R_{\aa}(\mm_A^rJ) \subsetneq R$, and so $\x^{\aa} \notin \mm_A^rJ$. This implies that $\x^{\aa} \notin \mm_A^{N+r}$, and hence, $|\pi(\aa)| \leq |\aa| < N+r$. Thus,
\begin{align} \label{E5}
\pi(\mathcal{P}) \subset \mathcal{Q}.
\end{align}

Fix $\b \in \pi(\mathcal{P})$ and consider $\aa^0 = (\b,a_d^0) \in \mathcal{P}$, in which $a_d^0$ is chosen to be the smallest possible. Then it follows from the definition of $\mathcal{P}$ that
$$
\ell_R(R_{\aa^0}(\mm_A^rJ)) \geq 0 \text{ and } \ell_R(R_{\aa^0}(J)) \geq 1.
$$
Since $J$ is a monomial ideal, by part Lemma \ref{L3}.(a), we have $\ini_{\prec}(J) = J$. Therefore,
$$
R_{\aa^0}(J) X_d^r \x^{\aa^0} \subset \mm_A^r \ini_{\prec}(J) = \mm_A^rJ.
$$
Hence,
$$
R_{\aa^0}(\mm_A^rJ) \subsetneq R_{\aa^0}(J) \subset R_{(\b, a_d^0+r)}(\mm_A^rJ) \subset R_{(\b,a_d^0+r)}(J).
$$

Observe that if $(\b,a) \notin \mathcal{P}$ for all $a \geq a_d^0+r$ then
$$
\#\pi^{-1}(\b) \leq \#\{a_d^0, \dots, a_d^0+r-1\} = r.
$$
Otherwise, let $a_d^1 \ge a_d^0 +r$ be the smallest integer such that $\aa^1 = (\b,a_d^1) \in \mathcal{P}$. It can be seen that
$$
\ell_R(R_{\aa^1}(\mm_A^rJ)) \geq \ell_R(R_{(\b, a_d^0+r)}(\mm_A^rJ)) \geq \ell_R(R_{\aa^0}(J)) \geq 1
$$
and
$$
\ell_R(R_{\aa^1}(J)) \geq \ell_R(R_{\aa^1}(\mm_A^rJ)) + 1 \geq 2.
$$
Similar to our arguments for $\aa^0$, we get
$$
R_{\aa^1}(\mm_A^rJ) \subsetneq R_{\aa^1}(J) \subset R_{(\b, a_d^1+r)}(\mm_A^rJ) \subset R_{(\b,a_d^1+r)}(J).
$$

Observe again that if $(\b,a) \notin \mathcal{P}$ for all $a \geq a^1_d + r$ then
$$
\#\pi^{-1}(\b) \leq \#\{a_d^0, \dots, a_d^0+r-1\} \cup \{a_d^1, \dots, a_d^1+r-1\} = 2r.
$$
Otherwise, let $a_d^2 \geq a_d^1+r$ be the smallest integer such that $\aa^2 = (\b,a_d^2) \in \mathcal{P}$. By a similar argument as before, we have
$$
\ell_R(R_{\aa^2}(\mm_A^rJ)) \geq \ell_R(R_{(\b, a_d^1+r)}(\mm_A^rJ)) \geq \ell_R(R_{\aa^1}(J)) \geq 2
$$
and
$$
\ell_R(R_{\aa^2}(J)) \geq \ell_R(R_{\aa^2}(\mm_A^rJ)) + 1 \geq 3.
$$

Continuing in this fashion, we obtain a strictly increasing sequence $\aa^0 < \aa^1 < \dots$, where
$$
\ell_R(R_{\aa^i}(J)) \geq i+1.
$$
Since $i \leq \ell_R(R_{\aa^i}(\mm_A^rJ)) \leq \ell-1$, this sequence must terminates after at most $\aa^{\ell-1}$. This yields that
\begin{align} \label{E6}
\#\pi^{-1}(\b) \leq \#\cup_{i = 0}^{\ell-1} \{a_d^i, \dots, a_d^i+r-1\} = \ell \cdot r.
\end{align}

It now follows from (\ref{E4}), (\ref{E5}) and (\ref{E6}) that
\begin{align*}
\ell_R(A/\mm_A^rJ) - \ell_R(A/J) & \leq \ell \cdot \#\mathcal{P} \leq \ell \cdot (\ell \cdot r) \cdot \#\mathcal{Q} \\
& = \ell^2 \cdot r \cdot{(d - 1) + (N+r-1) \choose N+r-1}.
\end{align*}
Finally, since ${(d - 1) + (N+r-1) \choose N+r-1}$ is a polynomial of degree $d-1$ in $N+r-1$, the last inequality must hold for some sufficiently large constant $\gamma > 0$.
\end{proof}


\section{Growth of Lengths of Graded Families} \label{sec.main}

In this section, we prove our main result of the paper, Theorem \ref{thm.intro}. Our arguments go along the following line. Consider an $\mm$-primary ideal $\qq$ that is generated by a system of parameters $(x_1, \dots, x_d)$. Then $\gr_\qq(R)$ is a graded $R/\qq$-algebra generated by the residues of $x_1, \dots, x_d$ in $\qq/\qq^2$. Thus, there is a surjective homomorphism $A = R/\qq[X_1, \dots, X_d] \twoheadrightarrow \gr_\qq(R)$ from a polynomial ring over $R/\qq$ to $\gr_\qq(R)$, and we can realize $\gr_\qq(R)$ as a quotient ring $A/\mathfrak{a}$ of $A$. Note that $R/\qq$ is of dimension 0. Therefore, by passing the length function to the associated graded ring $\gr_\qq(R)$, we are reduced to the situation in Section \ref{sec.poly}. Our result will then follow from Proposition \ref{L2}.

\begin{theorem} \label{T1}
Let $(R,\mm)$ be a Noetherian local ring of dimension $d > 0$, and let $I_\bullet = \{I_n\}_{n \in \NN}$ be an arbitrary graded family of $\mm$-primary ideals of $R$. Then there exists a constant $\gamma > 0$ such that for all $n \ge 0$, we have
$$
\ell_R(R/I_{n+1}) - \ell_R(R/I_n) < \gamma \cdot n^{d-1}.
$$
\end{theorem}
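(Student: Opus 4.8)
The plan is to reduce the statement to the polynomial-ring setting of Section~\ref{sec.poly} and then invoke Proposition~\ref{L2}. First I would choose an $\mm$-primary ideal $\qq = (x_1,\dots,x_d)$ generated by a system of parameters; this is possible since $\dim R = d$. The associated graded ring $\gr_\qq(R)$ is a standard graded algebra over the Artinian ring $R/\qq$, generated in degree one by the residues of $x_1,\dots,x_d$, so there is a surjection from the polynomial ring $A = R/\qq[X_1,\dots,X_d]$ onto $\gr_\qq(R)$, realizing $\gr_\qq(R) \cong A/\mathfrak a$ for some ideal $\mathfrak a \subseteq A$. Put $\ell := \ell_{R/\qq}(R/\qq) = \ell_R(R/\qq)$, which is finite, and note $d = \dim R \ge \dim A = d$ so the ambient polynomial ring has exactly $d$ variables as in Section~\ref{sec.poly} (with $R/\qq$ in the role of the Artinian base ring).

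Next I would transport lengths. For each $n$, since $I_n$ is $\mm$-primary, Proposition~\ref{C1} gives $\ell_R(R/I_n) = \ell_{R/\qq}(\gr_\qq(R)/\ini_\qq(I_n))$. Writing $J_n \subseteq A$ for the preimage of $\ini_\qq(I_n)$ under the surjection $A \twoheadrightarrow \gr_\qq(R) = A/\mathfrak a$, we have $\ell_R(R/I_n) = \ell_{R/\qq}(A/J_n)$, with $\mathfrak a \subseteq J_n$ and $\mm_A \subseteq \sqrt{J_n}$ because $\gr_\qq(R)/\ini_\qq(I_n)$ has finite length. Now I would pass to leading/initial ideals: set $\widetilde J_n := \ini_\prec(J_n)$, the initial ideal of $J_n$ with respect to the degree-lexicographic filtration on $A$. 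By Lemma~\ref{L3}.(c), $\ell_{R/\qq}(A/J_n) = \ell_{R/\qq}(A/\widetilde J_n)$, and by Lemma~\ref{L3}.(a) each $\widetilde J_n$ is a \emph{monomial} ideal of $A$. The crucial point is that $\widetilde J_\bullet = \{\widetilde J_n\}$ is again a graded family: since $\ini_\qq$ of a product contains the product of the $\ini_\qq$'s (initial forms multiply, possibly with a drop), one gets $I_m I_n \subseteq I_{m+n} \Rightarrow \ini_\qq(I_m)\ini_\qq(I_n) \subseteq \ini_\qq(I_{m+n})$ in $\gr_\qq(R)$, hence $J_m J_n \subseteq J_{m+n}$ in $A$, and taking initial ideals with respect to $\prec$ (which is multiplicative in the same way) yields $\widetilde J_m \widetilde J_n \subseteq \widetilde J_{m+n}$. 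In particular $\widetilde J_n \supseteq \widetilde J_1^{\,n}$, and since $\widetilde J_1$ is $\mm_A$-primary we have $\mm_A^{c n} \subseteq \widetilde J_1^{\,n} \subseteq \widetilde J_n$ for a fixed constant $c$.

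With this in hand, I would estimate $\ell_R(R/I_n) - \ell_R(R/I_{n+1}) = \ell_{R/\qq}(A/\widetilde J_n) - \ell_{R/\qq}(A/\widetilde J_{n+1})$ using the chain $\mm_A \widetilde J_n \subseteq \widetilde J_1 \widetilde J_n \subseteq \widetilde J_{n+1}$, whence
\[
\ell_R(R/I_{n+1}) - \ell_R(R/I_n) \le \ell_{R/\qq}(A/\mm_A \widetilde J_n) - \ell_{R/\qq}(A/\widetilde J_n).
\]
Applying Proposition~\ref{L2} to the monomial ideal $\widetilde J_n$ with $N = cn$ (so that $\mm_A^N \subseteq \widetilde J_n$) and $r = 1$ bounds the right-hand side by $\gamma' \cdot 1 \cdot (cn + 1)^{d-1} < \gamma \, n^{d-1}$ for a suitable constant $\gamma > 0$ independent of $n$, which is exactly the desired inequality; the small values of $n$ are absorbed by enlarging $\gamma$.

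I expect the main obstacle to be the multiplicativity claim for initial ideals, i.e.\ verifying carefully that both $\ini_\qq$ (in $\gr_\qq(R)$) and $\ini_\prec$ (in $A$) are compatible with products so that the graded-family property descends from $I_\bullet$ to $\widetilde J_\bullet$, and in particular that the defining ideal $\mathfrak a$ of $\gr_\qq(R)$ does not interfere when one lifts from $\gr_\qq(R)$ back to $A$ and then takes $\ini_\prec$. One should also take care that $\bigcap_n \mm^n = 0$ (and the analogous vanishing needed to define the initial forms) holds, which follows from Krull's intersection theorem since $R$ is Noetherian local; if $R$ is not a domain one works with $\gr_\qq(R)$ directly where the filtration is separated because $\qq$ is $\mm$-primary. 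A secondary technical point is ensuring the constant $\gamma$ from Proposition~\ref{L2} is genuinely uniform in $N = cn$; this is fine because that proposition's constant is asserted not to depend on $N$ or $r$, and $c$ depends only on $\widetilde J_1$, hence only on the fixed data $(R, \qq, I_\bullet)$.
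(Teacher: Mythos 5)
Your overall strategy is the paper's own: pass to $\gr_\qq(R)\cong A/\mathfrak{a}$ with $A=R/\qq[X_1,\dots,X_d]$, transport lengths via Proposition \ref{C1} and Lemma \ref{L3}, replace the lifted ideals by the monomial ideals $\widetilde{J}_n=\ini_{\prec}(J_n)$, and finish with Proposition \ref{L2}; moreover the submultiplicativity of $\ini_\qq$ and $\ini_\prec$ that you single out as the main obstacle is genuinely fine (a product of initial forms is either zero or the initial form of the product), and it is exactly what the paper uses. The real problem is your final estimate. The chain $\mm_A\widetilde{J}_n\subseteq\widetilde{J}_1\widetilde{J}_n\subseteq\widetilde{J}_{n+1}$ requires $\mm_A\subseteq\widetilde{J}_1$, which there is no reason to have: it would essentially force $I_1$ to contain $\qq$ at the level of initial ideals. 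Concretely, take $R=k[[x,y]]$, $\qq=\mm$, $I_n=\mm^{2n}$; then $\mathfrak{a}=0$ and $\widetilde{J}_n=\mm_A^{2n}$, so $\mm_A\widetilde{J}_n=\mm_A^{2n+1}\not\subseteq\mm_A^{2n+2}=\widetilde{J}_{n+1}$, and the inequality you deduce from that chain, namely $\ell_R(R/I_{n+1})-\ell_R(R/I_n)\le \ell_{R/\qq}(A/\mm_A\widetilde{J}_n)-\ell_{R/\qq}(A/\widetilde{J}_n)$, is actually false there: the left side is $4n+3$ while the right side is $2n+1$. So Proposition \ref{L2} cannot be applied with $r=1$ as you do.

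The repair is already contained in your own setup. You introduced a constant $c$ with $\mm_A^{c}\subseteq\widetilde{J}_1$ (this is the paper's $r$ with $\qq^r\subseteq I_1$). Use instead the chain $\mm_A^{c}\widetilde{J}_n\subseteq\widetilde{J}_1\widetilde{J}_n\subseteq\widetilde{J}_{n+1}$, which gives $\ell_R(R/I_{n+1})-\ell_R(R/I_n)\le\ell_{R/\qq}(A/\mm_A^{c}\widetilde{J}_n)-\ell_{R/\qq}(A/\widetilde{J}_n)$, and apply Proposition \ref{L2} with the fixed value $r=c$ and $N=cn$ (legitimate since $\mm_A^{cn}\subseteq\widetilde{J}_1^{\,n}\subseteq\widetilde{J}_n$). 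The resulting bound $\gamma_0\,c\,(cn+c)^{d-1}=\gamma_0\,c^{d}(n+1)^{d-1}$ is still of order $n^{d-1}$, and after enlarging $\gamma$ to absorb small $n$ you recover the theorem. With that one change your argument coincides with the paper's proof of Theorem \ref{T1}; note the paper does not even need that $\{\widetilde{J}_n\}$ is a graded family, only the two containments $\mm_A^{c}\widetilde{J}_n\subseteq\widetilde{J}_{n+1}$ and $\mm_A^{cn}\subseteq\widetilde{J}_n$, which it derives directly from $\qq^rI_n\subseteq I_{n+1}$ and $\qq^{rn}\subseteq I_n$.
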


\begin{proof} As we have outlined above, let $\qq$ be an $\mm$-primary ideal of $R$ generated by a system of parameters $(x_1, \dots, x_d)$, and let $A = R/\qq[X_1, \dots, X_d]$ be the polynomial ring such that $\gr_\qq(R) \cong A/\mathfrak{a}$, where $\mathfrak{a} \subset A$ is a homogeneous ideal. Let $\mm_A = (X_1, \dots, X_d)$ be the irrelevant ideal of $A$. Note that $\mm_A$ is the maximal ideal of $A$ only when $\qq = \mm$.

Observe that for each $n \in \NN$, there exists a unique homogeneous ideal $J'_n \subset A$ containing $\mathfrak{a}$ such that $J_n'/\mathfrak{a} \cong \ini_{\qq}(I_n)$ in $\gr_\qq(R)$. Let $\prec$ be the degree lexicographic monomial ordering in $A$, and as constructed in Section \ref{sec.notation}, let $J_n = \ini_{\prec}(J_n')$. It follows from Lemma \ref{L3}.(a) that $J_n$ is a monomial ideal in $A$ for all $n \ge 1$.

Observe that since $I_1$ and $\qq$ are $\mm$-primary ideals, there exists an integer $r \in \NN$ such that $\qq^r \subset I_1$. We then have
$$
\qq^rI_n \subset I_1I_n \subset I_{n+1},
$$
and
$$
\ini_{\qq}(\qq^i) \cong (\mm_A^i + \mathfrak{a})/\mathfrak{a}, \text{ for all } i \geq 1.
$$
This implies that
$$
\ini_{\qq}(\qq^r) \ini_{\qq}(I_n) \subset \ini_{\qq}(\qq^rI_n) \subset \ini_{\qq}(I_{n+1}).
$$
Thus,
$$
\mm_A^rJ'_n \subset (\mm_A^r + \mathfrak{a})J'_n + \mathfrak{a} \subset J'_{n+1}.
$$
Therefore,
$$
\mm_A^rJ_n = \ini_{\prec}(\mm_A^r)\ini_{\prec}(J_n') \subset \ini_{\prec}(\mm_A^rJ_n') \subset \ini_{\prec}(J_{n+1}') = J_{n+1}.
$$
Hence, by Proposition \ref{C1} and Lemma \ref{L3}.(c), we get
\begin{align} \label{E7}
\ell_R(R/I_{n+1}) - \ell_R(R/I_n) & = \ell_{R/\qq}(\gr_{\qq}(R)/\ini_{\qq}(I_{n+1})) - \ell_{R/\qq}(\gr_{\qq}(R)/\ini_{\qq}(I_n)) \\
& = \ell_{R/\qq}(A/J_{n+1}') - \ell_{R/\qq}(A/J_n') \notag \\
& = \ell_{R/\qq}(A/J_{n+1}) - \ell_{R/\qq}(A/J_n) \notag \\
& \leq \ell_{R/\qq}(A/\mm_A^rJ_n) - \ell_{R/\qq}(A/J_n). \notag
\end{align}

Note that $\qq^{rn} \subset I_1^n \subset I_n$, and so $\ini_{\qq}(\qq^{rn}) \subset \ini_{q}(I_n)$. This implies that
$$\mm_A^{rn} \subset \mm_A^{rn} + \mathfrak{a} \subset J'_n,$$
and therefore,
$$\mm_A^{rn} = \ini_{\prec}(\mm_A^{rn}) \subset \ini_{\prec}(J_n') = J_n.$$
It then follows from Proposition \ref{L2} that there exists a constant $\gamma_0 > 0$ such that
\begin{align} \label{E8}
\ell_{R/\qq}(A/\mm_A^rJ_n) - \ell_{R/\qq}(A/J_n) < \gamma_0 \cdot r^d \cdot (n+1)^{d-1}.
\end{align}
Putting (\ref{E7}) and (\ref{E8}) together, and choosing an appropriate constant $\gamma > 0$, we get the desired inequality. The theorem is proved.
\end{proof}

\begin{remark} A similar statement to Theorem \ref{T1}, replacing the length $\ell_R(R/I_n)$ by the multiplicity $e(I_n)$, was proved recently in \cite[Theorem 2.2]{C3}.
\end{remark}

\begin{example} \label{example1}
Let $R = k[[x_1, \dots, x_n]]$, for $d \ge 2$, and let $t \in \NN$ be a positive integer. Consider the filtration $\{I_n = \mm^{tn}\}_{n \in \NN}$. Then
$$\ell_R(R/I_n) = {d+tn-1 \choose d-1}.$$
It is then easy to see that
$$\ell_R(R/I_{n+1}) - \ell_R(R/I_n) > \frac{t^d}{(d-1)!}n^{d-1}.$$
Hence, the constant $\gamma$ satisfying the statement of Theorem \ref{T1} must be at least $t^d/(d-1)!$. This example shows that $\gamma$ in general may depend on the graded family $\{I_n\}_{n \in \NN}$.
\end{example}

\begin{example} \label{example}
In the proof of \cite[Theorem 5.5]{C2}, Cutkosky gave an example showing that when $d = 0$ the limit $\lim_{n \rightarrow \infty} \ell_R(R/I_n)$ does not necessarily exist. We shall recall his example to see that when $d=0$ the statement of Theorem \ref{T1} may fail. Let $(R,\mm)$ be an Artinian local ring for which the nilradical of the $\mm$-adic completion of $R$ is not 0. Then, there exists a constant $t$ such that $\mm^t \not= 0$ and $\mm^{t+1} = 0$. We inductively define a sequence $\{i_j\}_{j \ge 1}$ by setting $i_1 = 2$ and choosing $i_{j+1}$ to be an even number strictly larger than $2^ji_j$ for all $j \ge 1$. Define
$$\tau(n) = \left\{ \begin{array}{rcl} 0 & \text{if} & i_j \le n < i_{j+1} \text{ and } j \text{ is even} \\
1 & \text{if} & i_j \le n < i_{j+1} \text{ and } j \text{ is odd.} \end{array} \right.$$
Consider the graded family $\{I_n = \mm^{t + \tau(n)}\}_{n \in \NN}$. Observe that as $n \rightarrow \infty$, $\gamma n^{d-1} = \frac{\gamma}{n} \rightarrow 0$. Thus, in this case, the inequality
$$\ell_R(R/I_{n+1}) - \ell_R(R/I_n) < \gamma n^{d-1}$$
does not hold for $n \gg 0$.
\end{example}


\end{document}